 \DeclareMathOperator{\Sym}{Sym}
 \DeclareMathOperator{\soc}{soc}\DeclareMathOperator{\Gal}{Gal}
 \DeclareMathOperator{\Frat}{Frat}
 \DeclareMathOperator{\Stab}{Stab}
\DeclareMathOperator{\charac}{char}
\DeclareMathOperator{\core}{Core}
\DeclareMathOperator{\alt}{Alt}
\DeclareMathOperator{\Aut}{Aut}
\DeclareMathOperator{\End}{End} \DeclareMathOperator{\h}{{H^1}}
 \DeclareMathOperator{\diag}{Diag}
\DeclareMathOperator{\Hc}{H}
 \newcommand{\ol}{\overline}
\newtheorem{thm}{Theorem}
 \newtheorem{lemma}[thm]{Lemma}
\newtheorem{prop}[thm]{Proposition}
\numberwithin{equation}{section}
\renewcommand{\footnote}{\endnote}
\begin{document}
\bibliographystyle{amsplain}
\title[Large Chebotarev invariant]{Finite groups with large Chebotarev invariant\thanks{Research partially supported by MIUR-Italy via PRIN Group theory and applications.}}
\author{Andrea Lucchini}
\address{Universit\`a degli Studi di Padova, Dipartimento di Matematica,\\ Via Trieste 63, 35121 Padova, Italy}
\author{Gareth Tracey}
\address{Department of Mathematical Sciences, University of Bath,\\
	Bath BA2 7AY, United Kingdom}

\subjclass{20D10, 20F05, 05C25}

\begin{abstract}A subset $\{g_1, \ldots , g_d\}$ of a finite group $G$ is said to invariably generate $G$ if the set $\{g_1^{x_1}, \ldots, g_d^{x_d}\}$ generates $G$ for every choice of $x_i \in G$. The Chebotarev invariant $C(G)$ of $G$ is the expected value of the random variable $n$ that is minimal
	subject to the requirement that $n$ randomly chosen elements of $G$ invariably generate $G$. The authors recently showed that for each $\epsilon>0$, there exists a constant $c_{\epsilon}$ such that $C(G)\le (1+\epsilon)\sqrt{|G|}+c_{\epsilon}$. This bound is asymptotically best possible. In this paper we prove a partial converse: namely, for each $\alpha>0$ there exists an absolute constant $\delta_{\alpha}$ such that if $G$ is a finite group and $C(G)>\alpha\sqrt{|G|}$, then $G$ has a section $X/Y$ such that $|X/Y|\geq \delta_{\alpha}\sqrt{|G|}$, and $X/Y\cong \mathbb{F}_q\rtimes H$ for some prime power $q$, with $H\le \mathbb{F}_q^{\times}$.\end{abstract}
\maketitle
\section{Introduction}
Following \cite{ig} and \cite{dixon}, we say that a subset $\left\{g_{1},g_{2},\hdots,g_{d}\right\}$ of a group $G$ \emph{invariably generates} $G$ if $\left\{g_{1}^{x_{1}},g_{2}^{x_{2}},\hdots,g_{d}^{x_{d}}\right\}$ generates $G$ for each $d$-tuple $(x_{1},x_{2}\hdots,x_{d})\in G^{d}$. The \emph{Chebotarev invariant} $C(G)$ of $G$ is the expected value of the random variable $n$ which is minimal subject to the requirement that $n$ randomly chosen elements of $G$ invariably generate $G$.

Motivated by the problem of finding field extensions $K/F$ such that a fixed finite group $G$ occurs as the Galois group of $K/F$, E. Kowalski and D. Zywina carried out a detailed investigation of the invariant $C(G)$ in \cite{kz}. Amongst many interesting results, they show that $C(G)$ can be quite large in comparison to $|G|$. More precisely, it is shown that if $G\cong G_q:= \mathbb{F}_q\rtimes\mathbb{F}^{\times}_q$, then
$$C(G)=q-\sum_{1\neq d\mid q-1}\frac{\mu(d)}{q(1-d^{-1})(1-d^{-1}+q^{-1})}.$$
In particular, $C(G_q)\sim \sqrt{|G_q|}$ as $q\rightarrow \infty$. It was also conjectured in \cite{kz} that these are the ``worst" cases: that is, that $C(G)=O(\sqrt{|G|})$ as $|G|\rightarrow\infty$. The conjecture was proved by the first author in \cite{AL}, and was later improved in \cite{ALGT} where it is shown that for each $\epsilon>0$, there exists a constant $c_{\epsilon}$ such that $C(G)\le (1+\epsilon)\sqrt{|G|}+c_{\epsilon}$. Furthermore, one has $C(G)\le \frac{5}{3}\sqrt{|G|}$ when $G$ is soluble. 

In this paper, we prove a partial converse. Informally, we prove that the only examples where $C(G)$ is a constant times $\sqrt{|G|}$ are those groups with a ``large" section isomorphic to a subgroup of $G_q$, for some prime power $q$. Our main result reads as follows.
\begin{thm}\label{MainTheorem} Fix a constant $\alpha>0$. There exists absolute constants $\beta_{\alpha}$, $\gamma_{\alpha}$, $\delta_{\alpha}$ and $k_{\alpha}$, depending only on $\alpha$, such that whenever $G$ is a finite group with the property that $C(G)>\alpha\sqrt{|G|}$, then $G$ has a factor group $\ol{G}$ such that\begin{enumerate}[(i)]
\item $\ol{G}\cong V\rtimes H$, with $V\cong \mathbb{F}_q^{k}$, and $H\le \Gamma L_1(q)\wr\Sym(k)$, with $q$ a prime power and $k\le k_{\alpha}$;
\item $|\ol{G}|\geq \delta_{\alpha}\sqrt{|G|}$; and
\item $\beta_{\alpha}|V|\le |H|\le \gamma_{\alpha}|V|$.
\end{enumerate}\end{thm}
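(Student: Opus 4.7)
The plan is to reverse-engineer the proof from \cite{ALGT} of the bound $C(G)\le (1+\epsilon)\sqrt{|G|}+c_\epsilon$. That argument estimates
\[
C(G)\;\le\;1+\sum_{[M]}\frac{\omega(M)}{1-\omega(M)},
\]
where $[M]$ ranges over $G$-conjugacy classes of maximal subgroups and $\omega(M)=|U_M|/|G|$ with $U_M=\bigcup_{x\in G}xMx^{-1}$. Via the crown / chief-factor decomposition, each class $[M]$ can be traced to a primitive monolithic quotient of $G$. The strategy is to show that only those primitive quotients of the form $V\rtimes H$ described in Theorem \ref{MainTheorem} can contribute a term of order $\sqrt{|V|}$ to this sum, while every other type of monolithic quotient contributes $o(\sqrt{|V|})$; the total must then concentrate on a single quotient of the prescribed shape.

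I would proceed by induction on $|G|$. If some proper factor group $G/K$ inherits a hypothesis $C(G/K)>\alpha'\sqrt{|G/K|}$ for a slightly weakened $\alpha'$ extractable from the inequality above, the inductive hypothesis produces the desired quotient of $G/K$ --- hence of $G$ --- provided $|K|$ is uniformly bounded, which is ensured by inspecting the sum. Otherwise one may assume $G$ is a minimal counterexample, and then a single class $[M]$, corresponding to a primitive quotient $\ol G=V\rtimes H$, is responsible for almost all of $C(G)$; the $\ol G$ produced is the candidate factor group demanded by the theorem, and the remainder of the argument identifies its structure.

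The heart of the argument is a module-theoretic dichotomy: the value $1-\omega(M)$ for a complement $M\cong H$ in $\ol G=V\rtimes H$ is controlled by the orbit structure of $H$ on $V$. If $V\cong \mathbb{F}_q^k$ with $H\le\Gamma L_1(q)\wr\Sym(k)$, then $H$ stabilises a direct-sum decomposition of $V$ into $1$-dimensional subspaces and so has many short orbits on $V\setminus\{0\}$, giving --- exactly as in the Kowalski--Zywina computation for $G_q$ --- a contribution of order $\sqrt{|V||H|}$. In every other case (non-abelian or product-type socle, or abelian socle with $H$ acting non-monomially) I would combine Aschbacher's classification of maximal subgroups of $\GL_k(q)$ with fixed-point-ratio bounds in the style of Liebeck--Shalev and Guralnick--Magaard to show that the smallest $H$-orbit on $V\setminus\{0\}$ has size $\gg|V|^{\eta}$ for some $\eta=\eta(\alpha)>0$, so that $1-\omega(M)\gg|V|^{-1+\eta}$ and the individual contribution is $o(\sqrt{|V|})$. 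This is the main technical obstacle.

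Once $\ol G$ is pinned down to the $V\rtimes H$ form, the numerical conditions (i)--(iii) fall out from the requirement $C(\ol G)\ge \alpha\sqrt{|G|}-O_\alpha(1)$: an explicit Kowalski--Zywina-style evaluation of $C(\ol G)$ in terms of $|V|$ and the divisor structure of $|H|\cap \mathbb{F}_q^\times$ forces $|H|$ to lie within absolute constants of $|V|$, giving (iii); the lower bound on $|\ol G|$ then gives (ii); and $k\le k_\alpha$ arises because if $k$ grew while $|H|/|V|$ stayed bounded, the short $H$-orbits on $V$ would become too sparse to sustain the required lower bound on $C(\ol G)$. All the constants $\beta_\alpha,\gamma_\alpha,\delta_\alpha,k_\alpha$ are thus calibrated together from the module-theoretic step.
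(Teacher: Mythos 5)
Your outer skeleton (induction on $|G|$, tracing the dominant contribution to $C(G)$ to a single primitive monolithic quotient $\ol G=V\rtimes H$ via the crown/chief-factor machinery, then pinning down the structure of $H$) matches the paper's. But the heart of your argument --- the ``module-theoretic dichotomy'' --- is based on the wrong invariant, and the step as you state it is false. For the complement $M=H$ in $\ol G=V\rtimes H$, an element $(v,h)$ lies in a conjugate of $H$ exactly when $v\in[V,h]$, so $1-\omega(M)$ is (up to a factor of $2$) $|H^{\ast}|/|H|$, where $H^{\ast}$ is the set of elements of $H$ fixing a non-zero vector of $V$. This is \emph{not} controlled by the minimal orbit length of $H$ on $V\setminus\{0\}$: a Singer cycle has a single regular orbit of length $|V|-1$ (as long as an orbit can be), yet $H^{\ast}=1$ and $1-\omega(M)=|V|^{-1}$, so your claimed implication ``smallest orbit $\gg|V|^{\eta}$ forces $1-\omega(M)\gg|V|^{-1+\eta}$'' fails. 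Your heuristic for the extremal example is also backwards: $C(G_q)$ is large because $\mathbb{F}_q^{\times}$ acts fixed-point-freely on $\mathbb{F}_q$ --- one regular orbit, almost no elements with fixed vectors --- not because of ``many short orbits''; short orbits mean large point stabilisers, hence many elements in $H^{\ast}$, hence a \emph{small} contribution. So the classification you would actually need is of irreducible $H\le GL(V)$ for which $|H^{\ast}|$ is bounded by an absolute constant (a Frobenius-type condition), and Aschbacher classes plus fixed-point-ratio bounds do not deliver this; the paper instead proves it (Propositions \ref{imprim} and \ref{prim}) by analysing weakly quasiprimitive linear groups through the generalised Fitting subgroup, the extraspecial and quasisimple cases, and Zassenhaus's theorem on perfect Frobenius complements, landing on $R\wr\Sym(k)$ with $R\le\Gamma L_1(F_m)$.

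Two further ingredients are absent from your plan and cannot be skipped. First, the upper bound on the contribution of an abelian crown involves not just $|H|/|H^{\ast}|$ but also $m=\dim H^{1}(H,V)$ and the crown multiplicity $\delta_V(G)$ (Proposition \ref{CheboProp}); one must show $\delta=1$ and bound $m$ in terms of $|H^{\ast}|$, which is the content of Lemma \ref{cohom} and uses Guralnick--Hoffman together with lower bounds on minimal representation degrees and Sylow $p$-orders of simple groups. Second, in the inductive step the paper does not need $|U|$ bounded: passing to $G/U$ \emph{improves} the constant to roughly $\alpha\sqrt{|U|}$, and the induction is run with a varying constant whose dependence must be tracked through all of $\beta_{\alpha},\gamma_{\alpha},\delta_{\alpha},k_{\alpha}$ (and the bound $k\le k_{\alpha}$ comes from counting orbits of $GL_m(p)\wr\Sym(k)$ on $V$, not from sparsity of short orbits). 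As it stands, your proposal has a genuine gap at its central step.
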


Our approach utilises the theory of crowns in finite groups, which we describe in Section \ref{crowns}. We also require a characterisation of those irreducible linear groups $H\le GL(V)$ such that the set $H^{\ast}(V):=\{h\in H\text{ : }v^h=v\text{ for some }v\in V\backslash\{0\}\}$ is bounded above by an absolute constant, and this is the content of Section \ref{Lin}. Finally, Section \ref{MainProof} is reserved for the proof of Theorem \ref{MainTheorem}.

\section{Crowns in finite groups}\label{crowns}  
Before defining the notion of a crown in a finite group, we require some terminology. First, let $L$ be a monolithic primitive group. That is, $L$ is a finite group with a unique minimal normal subgroup $V\not\le \Frat(L)$. For each positive integer $k$, write $L^k$ for the $k$-fold direct product of $L$. The \emph{crown-based power of $L$ of size $k$} is the subgroup $L_k$ of $L^k$ defined by
$$L_k=\{(l_1, \ldots , l_k) \in L^k  \mid l_1 \equiv \cdots \equiv l_k \ {\mbox{mod}}\text{ } V \}.$$
Equivalently, $L_k=V^k \diag L^k$. 

Next, let $G$ be a finite group. We say that a group $V$ is a \emph{$G$-group} if $G$ acts on $V$ via automorphisms. Following  \cite{paz},  we say that
two irreducible $G$-groups $V_1$ and $V_2$  are  \emph{$G$-equivalent} and we put $V_1 \sim_G V_2$, if there are
isomorphisms $\phi: V_1\rightarrow V_2$ and $\Phi: V_1\rtimes G \rightarrow V_2\rtimes G$ such that the following diagram commutes:

\begin{equation*}
\begin{CD}
1@>>>V_{1}@>>>V_{1}\rtimes G@>>>G@>>>1\\
@. @VV{\phi}V @VV{\Phi}V @|\\
1@>>>V_{2}@>>>V_{2}\rtimes G@>>>G@>>>1.
\end{CD}
\end{equation*}

\

Note that two $G$\nobreakdash-isomorphic
$G$\nobreakdash-groups are $G$\nobreakdash-equivalent. In the abelian case, the converse is true:
if $V_1$ and $V_2$ are abelian and $G$\nobreakdash-equivalent, then $V_1$
and $V_2$ are also $G$\nobreakdash-isomorphic.
It is proved (see for example \cite[Proposition 1.4]{paz}) that two  chief factors $V_1$ and $V_2$ of $G$ are  $G$-equivalent if and only if  either they are  $G$-isomorphic, or there exists a maximal subgroup $M$ of $G$ such that $G/\core_G(M)$ has two minimal normal subgroups $N_1$ and $N_2$
$G$-isomorphic to $V_1$ and $V_2$ respectively. For example, the  minimal normal subgroups of a crown-based power $L_k$ are all $L_k$-equivalent.

Let $V=X/Y$ be a chief factor of $G$. A complement $U$ to $V$ in $G$ is a subgroup $U $ of $G$ such that $UV=G$ and $U \cap X=Y$. We say that   $V=X/Y$ is a Frattini chief factor if  $X/Y$ is contained in the Frattini subgroup of $G/Y$; this is equivalent to saying that $V$ is abelian and there is no complement to $V$ in $G$.
The  number of non-Frattini chief factors $G$-equivalent to $V$ in any chief series of $G$  does not depend on the series, and so this number is well-defined: we will write it as  $\delta_V(G)$. We now define $L_V$, the  monolithic primitive group associated to $V$, by
$$L_{V}:=
\begin{cases}
V\rtimes (G/C_G(V)) & \text{ if $V$ is abelian}, \\
G/C_G(V)& \text{ otherwise}.
\end{cases}
$$
If $V$ is a non-Frattini chief factor of $G$, then $L_V$ is a homomorphic image of $G$.
More precisely,  there exists
a normal subgroup $N$ of $G$ such that $G/N \cong L_V$ and $\soc(G/N)\sim_G V$. Consider now  all the normal subgroups $N$ of $G$ with the property that  $G/N \cong L_V$ and $\soc(G/N)\sim_G V$:
the intersection $R_G(V)$ of all these subgroups has the property that  $G/R_G(V)$ is isomorphic to the crown-based  power $(L_V)_{\delta_V(G)}$.
The socle $I_G(V)/R_G(V)$ of $G/R_G(V)$ is called the $V$-crown of $G$ and it is  a direct product of $\delta_V(G)$ minimal normal subgroups $G$-equivalent to $V$. 

We now record a lemma and two propositions which will be crucial in our proof of Theorem \ref{MainTheorem}. The lemma reads as follows.
\begin{lemma}{\cite[Lemma 1.3.6]{classes}}\label{corona}
	Let $G$ be a finite  group with trivial Frattini subgroup. There exists
	a chief factor $V$ of $G$ and a non trivial normal subgroup $U$ of $G$ such that $I_G(V)=R_G(V)\times U.$
\end{lemma}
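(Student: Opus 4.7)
The plan is to take $V$ to be any minimal normal subgroup of $G$, and to define $U$ as the subgroup generated by all minimal normal subgroups of $G$ that are $G$-equivalent to $V$. This $U$ is a non-trivial normal subgroup of $G$ containing $V$, and I want to show $I_G(V) = R_G(V) \times U$.

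First, since $\Frat(G)=1$, I can choose a maximal subgroup $M$ of $G$ with $V\not\le M$, and set $C=\core_G(M)$. The minimality of $V$ as a normal subgroup forces $V\cap C=1$ (routine, handling the abelian and nonabelian cases separately), and one checks that $G/C\cong L_V$ with $\soc(G/C)\sim_G V$. Hence $C$ appears among the normal subgroups whose intersection defines $R_G(V)$, so $R_G(V)\le C$, and in particular $V\cap R_G(V)=1$. Applying this argument to an arbitrary minimal normal subgroup $N$ of $G$ with $N\sim_G V$ gives $N\cap R_G(V)=N\cap R_G(N)=1$.

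Next, to see that $U\cap R_G(V)=1$: any nontrivial normal subgroup of $G$ contained in $U\cap R_G(V)$ would contain a minimal normal subgroup $N$ of $G$, and since $N\le U$ we have $N\sim_G V$, contradicting the previous paragraph. Since $U$ and $R_G(V)$ are normal subgroups of $G$ with trivial intersection, $[U,R_G(V)]\le U\cap R_G(V)=1$, so they commute and $UR_G(V)=U\times R_G(V)$.

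The remaining — and most delicate — step is to prove $UR_G(V)=I_G(V)$. Under the isomorphism $G/R_G(V)\cong (L_V)_{\delta_V(G)}$, the image of $U$ lies in the socle $V^{\delta_V(G)}=I_G(V)/R_G(V)$, and I must show the image fills out this socle. The key point, and the main obstacle, is that $\Frat(G)=1$ should preclude $G/R_G(V)$ from having minimal normal subgroups in its socle which do not arise as images of minimal normal subgroups of $G$: any such ``phantom'' minimal normal subgroup in $V^{\delta_V(G)}$ would trace back to a normal subgroup $M\lhd G$ with $M\cap\soc(G)\le R_G(V)$, and the Frattini hypothesis, combined with the observation that crown-based powers themselves have trivial Frattini subgroup, can be used to rule this out. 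Once the images of the generators of $U$ are known to sweep out every simple submodule of $V^{\delta_V(G)}$, we obtain $UR_G(V)=I_G(V)$ and hence $I_G(V)=R_G(V)\times U$, as required.
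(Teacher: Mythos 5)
The paper does not actually prove this lemma; it is imported verbatim from \cite[Lemma 1.3.6]{classes}. Your proposal, unfortunately, breaks down at exactly the step you flag as delicate, and the breakdown is fatal to the whole strategy: it is \emph{not} true that one may take $V$ to be an arbitrary minimal normal subgroup of $G$. The point is that $\delta_V(G)$ counts all non-Frattini chief factors $G$-equivalent to $V$ in a chief series, not only those lying inside $\soc(G)$, and the hypothesis $\Frat(G)=1$ does nothing to prevent complemented chief factors equivalent to $V$ from occurring \emph{above} the socle. Such factors contribute minimal normal subgroups of $G/R_G(V)$ inside the crown which are not images of minimal normal subgroups of $G$ --- precisely the ``phantoms'' you hoped to exclude --- and they cannot be excluded.

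A concrete counterexample: let $G=C_2\times\Sym(3)$ (so $\Frat(G)=1$) and let $V$ be the central direct factor $C_2$, a minimal normal subgroup with trivial $G$-action, so $L_V=C_2$. All three index-two subgroups $N$ of $G$ satisfy $G/N\cong L_V$ with $\soc(G/N)\sim_G V$, hence $R_G(V)=\alt(3)$, $G/R_G(V)\cong C_2\times C_2=(L_V)_2$, $\delta_V(G)=2$ and $I_G(V)=G$: the second copy of $V$ in the crown comes from the top factor $G/(C_2\times\alt(3))$. Your $U$ is just the direct factor $C_2$, and $R_G(V)\times U$ has order $6$, whereas $|I_G(V)|=12$. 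Worse, \emph{no} normal subgroup $U$ of $G$ satisfies $G=\alt(3)\times U$, since $G$ has no normal subgroup of order $4$; so this $V$ cannot be repaired by a cleverer choice of $U$. The lemma is genuinely existential in $V$ (here $V\cong C_3$ works, with $U=\alt(3)$), and the cited proof selects the chief factor $V$ by an extremal condition guaranteeing that the whole crown is covered by $\soc(G)$, i.e.\ that $I_G(V)\le R_G(V)\soc(G)$. Your preliminary steps --- that $V\cap R_G(V)=1$, that $U\cap R_G(V)=1$, and hence that $UR_G(V)=U\times R_G(V)\le I_G(V)$ --- are correct, but the final equality $UR_G(V)=I_G(V)$ is false for your choice of $V$, so the proof as written does not establish the lemma.
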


To state the propositions, we need some additional notation. For a finite group $G$, and an abelian chief factor $V$ of $G$, set $H_{V}=H_{V}(G):=G/C_G(V)$, $m=m_V=m_{V}(G):=\dim_{\End_{G}(V)}\h(H_{V},V)$, and write $H^{\ast}=H^{\ast}({V})=H^{\ast}_{G}(V)$ for the set of elements $h$ of $H_{V}$ which fix a non-zero vector in $V$. Also, let $\delta_{V}=\delta_V(G)$, and set $\theta_{V}=\theta_V(G)=0$ if $\delta_{V}=1$, and $\theta_{V}=1$ otherwise. Finally, let $q_{V}=q_{V}(G):=|\End_{G}(V)|$ and $n_{V}=n_{V}(G):=\dim_{\End_{G}(V)}{V}$. Note that $\End_{G}(V)$ is a finite field, since $V$ is finite and irreducible.
\begin{prop}{\cite[Proposition 8 and the Proof of Theorem 1]{ALGT}}\label{CheboProp1} Let $G$ be a finite group with trivial Frattini subgroup, and let $U$, $V$ and $R=R_G(V)$ be as in Lemma \ref{corona}. If $U$ is non-abelian, then there exists absolute constants $b_1$, $b_2$ and $b_3$ such that 
$$C(G)\le C(G/U)+\lceil b_{3}(\log{|{G}|})^{2}\rceil +\frac{b_{1}}{b_{2}}\sqrt{|{G}|^{3}}\log{|G|}(1-b_{2}/\log{|{G}|})^{\lceil b_{3}(\log{|{G}|})^{2}\rceil}.$$
\end{prop}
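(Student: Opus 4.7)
The plan is to bound $C(G)$ by $C(G/U)$ plus the expected number of additional random elements needed, once $G/U$ has been invariably generated, for invariable generation to extend to all of $G$. Let $T$ denote the minimal random variable such that $T$ independent uniform samples invariably generate $G$, and similarly $T'$ for $G/U$; coupling the two processes so that the first $T'$ samples agree, we have $S := T - T' \geq 0$ and $C(G) \leq C(G/U) + E[S]$. Using the tail-sum identity $E[S] = \sum_{n \geq 0} P(S > n)$ and splitting at the threshold $N := \lceil b_3 (\log|G|)^2 \rceil$, I obtain
\[
C(G) \leq C(G/U) + N + \sum_{n > N} P(S > n),
\]
so the heart of the matter is to prove a geometric-decay estimate on $P(S > n)$.

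To obtain the decay estimate, I would exploit the crown decomposition from Lemma \ref{corona}: $I_G(V) = R_G(V) \times U$ with $U$ non-abelian. After invariable generation of $G/U$ is achieved, the only way $G$ can fail to be invariably generated is that the tuple, read modulo $R_G(V)$, fails to invariably generate the non-abelian socle component $U$; equivalently, some $G$-conjugacy class of maximal subgroups of $G$ covering $U$ traps every sample. Since $U$ is a direct product of isomorphic non-abelian simple groups, one invokes quantitative invariable-generation estimates for such subgroups to show that, at each additional sample, the conditional probability of escaping such a class is at least of order $1/\log|G|$. A counting argument controlling both the number of relevant maximal-subgroup classes and the measure of their conjugates then yields a bound of the form
\[
P(S > n) \leq b_1 |G|^{3/2} \log|G| \cdot (1 - b_2/\log|G|)^{n}.
\]

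Substituting this into the tail sum and evaluating the geometric series $\sum_{n>N}(1-b_2/\log|G|)^n$ produces an additional factor of $\log|G|/b_2$, which absorbs into the $b_1/b_2$ coefficient of the displayed inequality. The main obstacle is the key estimate on $P(S > n)$: both the per-step success lower bound $b_2/\log|G|$ and the polynomial prefactor $|G|^{3/2}\log|G|$ require careful analysis — the former resting on invariable-generation bounds for almost simple groups, the latter on a union bound over witnessing maximal subgroups — and together these constitute the bulk of the argument in Proposition 8 of \cite{ALGT}.
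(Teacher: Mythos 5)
First, a point of reference: the paper does not prove this proposition at all --- it is imported verbatim from \cite[Proposition 8 and the proof of Theorem 1]{ALGT}, so there is no internal proof to compare against. Your outline does reproduce the architecture of the argument in that source: the recursion $C(G)\le C(G/U)+E[S]$ via the waiting-time/tail-sum formulation (equivalently, via $1-ab\le(1-a)+(1-b)$ applied to the conditional probability of invariable generation modulo $U$), the reduction of the ``failure beyond level $U$'' event to the existence of a maximal subgroup $M$ with $MU=G$ all of whose conjugates meet every sampled class, the union bound over such $M$, and the split of the tail sum at $N=\lceil b_3(\log|G|)^2\rceil$. One small bookkeeping slip: if you put the factor $\log|G|$ into the prefactor of $P(S>n)$ \emph{and} then pick up another factor $\log|G|/b_2$ from summing the geometric series, you land on $|G|^{3/2}(\log|G|)^2$, not the stated $|G|^{3/2}\log|G|$; the correct accounting is $P(S>n)\le b_1|G|^{3/2}(1-b_2/\log|G|)^n$, with the single $\log|G|$ in the conclusion coming from the geometric sum.

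The genuine gap is that the two quantitative inputs on which everything rests are asserted rather than proved, and they are precisely the content of the cited result. (1) The prefactor $|G|^{3/2}$ is not ``a counting argument controlling the number of relevant maximal-subgroup classes'' in any routine sense: it comes from the Liebeck--Pyber--Shalev bound (towards Wall's conjecture, \cite{LPS}) that a finite group has at most $O(|G|^{3/2})$ maximal subgroups, applied to the maximal subgroups supplementing $U$. (2) The per-element escape probability $b_2/\log|G|$ is not a consequence of ``invariable-generation estimates for almost simple groups''; it is a lower bound on the proportion of derangements in the relevant primitive actions of the monolithic group $L_V$ and its crown-based powers (Cameron--Cohen/Fulman--Guralnick-type estimates, with the $\log|G|$ absorbing the number of simple factors of $\soc(L_V)$). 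Without establishing (1) and (2) --- or explicitly invoking them as black boxes with references --- the proposal is a correct roadmap but not a proof; as written it ultimately defers ``the bulk of the argument'' back to the very proposition being proved. There is also a stopping-time subtlety in bounding $P(S>n)$: since the witnessing $M$ depends on all the samples and $T_{G/U}$ is random, the union bound over conjugacy classes of $M$ must be performed \emph{before} invoking independence of the post-$T_{G/U}$ samples; your sketch glosses over this order of operations.
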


\begin{prop}{\cite[Proposition 8 and the Proof of Theorem 1]{ALGT}}\label{CheboProp} Let $G$ be a finite group with trivial Frattini subgroup, and let $U$, $V$ and $R=R_G(V)$ be as in Lemma \ref{corona}. Suppose that $V$ is abelian, and write $q=q_V$, $n=n_V$ and $H=H_V$, $H^{\ast}=H^{\ast}(V)$ and $m=m_V$. Also, set $\delta=\delta_{V}$ and $\theta=\theta_V$. Set
	
	$$\alpha_U:=\begin{cases}\sum_{0\leq i\leq \delta-1}\frac{q^\delta}{q^\delta-q^i}\leq \delta+\frac{q}{(q-1)^2}& \text { if }H=1,\\
	\min\left\{\left(\delta\cdot \theta+m+\frac{q}{q-1}\right)\frac{|H|}{|H^{\ast}|},\left(\lceil\frac{\delta\cdot \theta}{n}\rceil+\frac{q^n}{q^n-1}\right)|H|\right\}& \text { otherwise.}
	\end{cases}$$
	Then
	$$C(G)\le C(G/U)+\alpha_U.$$
\end{prop}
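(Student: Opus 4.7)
The plan is to couple random invariably-generating sequences in $G$ and in $G/U$ and bound the expected number of additional elements needed. Let $y_1, y_2, \dots$ be i.i.d.\ uniform on $G$, let $T_G$ be the first $n$ for which $\{y_1, \dots, y_n\}$ invariably generates $G$, and let $T_{G/U}$ be the analogous stopping time for their projections to $G/U$. Since $T_{G/U} \le T_G$ pointwise and $C(G) = E[T_G]$, $C(G/U) = E[T_{G/U}]$, it suffices to bound $E[T_G - T_{G/U}]$ by $\alpha_U$.

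For the structural step, Lemma~\ref{corona} identifies $U$ with the crown $I_G(V)/R_G(V)$ as a $G$-module, so $U \cong V^{\delta}$ with $R_G(V)$ acting trivially and $H = G/C_G(V)$ acting diagonally. If $y_1, \dots, y_n$ invariably generate $G/U$, then for every choice of conjugators $x_i$ the subgroup $K := \langle y_1^{x_1}, \dots, y_n^{x_n}\rangle$ satisfies $KU = G$; since $U$ is abelian, $K \cap U$ is a $G$-submodule of $U$. Invariable generation of $G$ by $y_1,\dots,y_n$ is then equivalent to $K \cap U = U$ for every such choice, reducing the task to estimating how many further random picks are required to lift each such $K \cap U$ to all of $U$ — a covering problem on the $H$-module $V^{\delta}$.

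The expression for $\alpha_U$ emerges from two waiting-time analyses. When $H = 1$, $V \cong \mathbb{F}_q$ is a trivial $G$-module and $U$ is central, so invariable generation coincides with generation and the question reduces to spanning $\mathbb{F}_q^{\delta}$ by random vectors. The expected wait to jump from spanning dimension $i$ to dimension $i+1$ is $q^{\delta}/(q^{\delta}-q^i)$; summing yields $\sum_{i=0}^{\delta-1}q^{\delta}/(q^{\delta}-q^i)$, which is bounded by $\delta + q/(q-1)^2$ via a geometric series. When $H \ne 1$, two competing strategies give the two entries in the minimum. The first waits for random elements whose $H$-image lies in $H \setminus H^{\ast}$ (hence fixes no nonzero vector of $V$) — probability $1 - |H^{\ast}|/|H|$, yielding the $|H|/|H^{\ast}|$ factor. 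Such an element, modulo a derivation class in $H^1(H, V)$, consumes a full copy of $V$; bookkeeping through a $q/(q-1)$ tail then produces $(\delta\cdot\theta + m + q/(q-1))|H|/|H^{\ast}|$. The second strategy controls the $V^{\delta}$-component of random elements directly in the quotient $V^{\delta}\rtimes H$, giving the $|H|$ factor with a $q^n/(q^n-1)$ tail and ceiling $\lceil \delta\cdot\theta/n\rceil$.

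The main technical obstacle is the cohomological correction $m = \dim_{\End_G(V)} H^1(H, V)$: when the extension of $G$ over $U$ does not split as a direct factor, the embedding of $H$ in $G/R_G(V)$ is twisted by a derivation, so the $V$-coordinate of a random conjugate is uniform only up to a coset determined by that derivation. Handling this twist carefully — so as to obtain the sharp $\delta\cdot\theta + m$ term rather than a weaker worst-case bound — is the heart of the argument, and accounts for the presence of both $\theta$ (tracking whether $\delta \ge 2$) and $m$ in the formula.
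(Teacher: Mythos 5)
Note first that the paper does not actually prove this proposition: it is cited verbatim from \cite{ALGT}, so there is no in-paper argument to compare against. Your coupling ($T_{G/U}\le T_G$ pointwise, hence bound $E[T_G-T_{G/U}]$) and the reduction to a covering problem on the crown $U\cong V^{\delta}$ do match the spirit of that source, and the $H=1$ coupon-collector computation is fine as far as it goes; but even there, to conclude $K(x)=G$ from spanning of the $U$-coordinates you also need the crown-theoretic fact that $R_G(V)$ has no chief factor $G$-equivalent to $V$, to exclude Goursat fibre-product subgroups that would otherwise survive. You do not mention this.

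The genuine gap is in the $H\ne 1$ case, where the argument is not only sketchy but has the key probabilistic condition reversed. You wait for elements whose $H$-image lies in $H\setminus H^{\ast}$, assert this has probability $1-|H^{\ast}|/|H|$, and say this ``yields the $|H|/|H^{\ast}|$ factor''; but $1/(1-|H^{\ast}|/|H|)=|H|/(|H|-|H^{\ast}|)\ne |H|/|H^{\ast}|$, so the arithmetic does not close. More fundamentally, the elements one must wait for are those whose $H$-image lies \emph{in} $H^{\ast}$, not its complement: if $h\notin H^{\ast}$ then $1-h$ is invertible on $V$, so $\mathrm{Im}(1-h)=V$ and every $(v,h)\in V\rtimes H$ lies in a complement of $V$, meaning such an element guarantees no progress. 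It is precisely when $h\in H^{\ast}$ that $\mathrm{Im}(1-h)$ is a proper subspace and a random $v$ avoids it (and the finitely many $\Hc^{1}(H,V)$-twists of it) with probability at least $1-1/q$, which is the source of the $q/(q-1)$ tail and of the $m$ term. As a sanity check, take $G_q=\mathbb{F}_q\rtimes\mathbb{F}_q^{\times}$: there $H^{\ast}=\{1\}$, $m=\theta=0$, so $\alpha_U=\tfrac{q}{q-1}\cdot\tfrac{|H|}{|H^{\ast}|}=q$, exactly matching Kowalski--Zywina, whereas your direction would give a bounded quantity. Beyond this, the bookkeeping producing the $\delta\cdot\theta+m$ count --- which must hold uniformly over all choices of conjugators $x_1,\ldots,x_n$ simultaneously, the real difficulty --- and the second strategy giving the $\lceil\delta\cdot\theta/n\rceil$ term are only gestured at (``bookkeeping'', ``consumes a full copy of $V$''), so even after correcting the direction the argument as written does not establish the stated bound.
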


We conclude this section with the theorem of the first author mentioned in the introduction.
\begin{thm}{\cite[Main Theorem]{AL}}\label{CheboThm} There exists an absolute constant $C$ such that $C(G)\le C\sqrt{|G|}$ for any finite group $G$.\end{thm}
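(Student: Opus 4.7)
The plan is to prove the bound by strong induction on $|G|$, peeling off one crown at a time using Lemma \ref{corona} together with the peel-off estimates of Propositions \ref{CheboProp1} and \ref{CheboProp}. I would fix an absolute constant $C$ (to be chosen at the end), assume $C(H) \le C\sqrt{|H|}$ for every finite group $H$ of smaller order, and aim to establish $C(G) \le C\sqrt{|G|}$.

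First I would reduce to the case $\Frat(G) = 1$: since any lift of an invariable generating tuple for $G/\Frat(G)$ invariably generates $G$ (by the defining property of non-generators), the corresponding stopping-time random variables coincide pathwise and $C(G) = C(G/\Frat(G))$. With $\Frat(G) = 1$, Lemma \ref{corona} yields a chief factor $V$ and a nontrivial normal subgroup $U$ of $G$ with $I_G(V) = R_G(V) \times U$. The inductive hypothesis gives $C(G/U) \le C\sqrt{|G/U|}$, and the work is to bound the increment $C(G) - C(G/U)$.

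If $U$ is non-abelian, Proposition \ref{CheboProp1} bounds the increment by $\lceil b_3\log^2|G|\rceil + (b_1/b_2)|G|^{3/2}(\log|G|)(1-b_2/\log|G|)^{\lceil b_3\log^2|G|\rceil}$. The final bracketed factor is asymptotically $|G|^{-b_2 b_3}$, so choosing $b_3$ large enough (independent of $G$) makes the increment $o(\sqrt{|G|})$. Combined with the bound $|U|\geq 60$, yielding $\sqrt{|G|} - \sqrt{|G/U|} \geq (1-1/\sqrt{60})\sqrt{|G|}$, the induction closes for all but finitely many $|G|$, and these small-order exceptions can be absorbed into $C$.

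The substantive case is $U$ abelian, where Proposition \ref{CheboProp} permits $\alpha_U$ to be of order $\sqrt{|U|}$, sharp by the Kowalski--Zywina family $G_q$. Setting $u=|U|$ and $w=|G/U|$ so $|G|=uw$, the inductive step requires $\alpha_U \le C(\sqrt{uw} - \sqrt{w}) = C\sqrt{w}(\sqrt{u}-1)$. When $H_V=1$ the first branch gives $\alpha_U \le \delta+1$, easily absorbed. Otherwise one balances the two expressions inside the $\min$ defining $\alpha_U$, using $|U|=q^{n\delta}$, $|H_V|\le q^n-1$, and $|H^\ast|\ge 1$. The main obstacle will be the extremal regime in which $u\approx|G|$, $w$ is small, and $H_V$ acts fixed-point-freely on $V\setminus\{0\}$ (so $|H^\ast|=1$ and $\alpha_U$ is forced near $|H_V|$); here one must exploit the explicit structure of the $G_q$-type factor to show $|H_V|\le c\sqrt{u}$ for an absolute constant $c$, after which the induction closes once $C$ is taken large enough.
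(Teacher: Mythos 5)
This theorem is not proved in the paper at all: it is quoted from \cite{AL}, and the cited proof is indeed a crown-peeling induction of the kind you sketch (Lemma \ref{corona} plus the increment bounds of Propositions \ref{CheboProp1} and \ref{CheboProp}), so your overall strategy matches the source. However, your plan for the decisive abelian case has a genuine gap, in fact two. First, the inequality ``$|H_V|\le q^n-1$'' that you propose to use when balancing the two branches of $\alpha_U$ is false in general: $H_V=G/C_G(V)$ is an arbitrary irreducible subgroup of $GL_n(q)$ and can have order close to $q^{n^2}$; the bound $|H_V|\le q^n-1$ holds only in the fixed-point-free case. For general $H_V$, large $\delta$, and large $m$, the first branch of $\alpha_U$ is only controlled after one bounds $m$ (Guralnick--Hoffman gives $m\le n/2$, and the structural results behind Lemma \ref{cohom}, i.e.\ \cite[Lemma 9]{AL}, control $m$ against $|H^{\ast}|$); your proposal never invokes any cohomological input, and without it the increment is not $O(\sqrt{|G|}-\sqrt{|G/U|})$.

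Second, the target you set yourself in the extremal regime, namely $|H_V|\le c\sqrt{u}$ with $u=|U|$, is false precisely for the Kowalski--Zywina examples you cite as sharp: for $G_q=\mathbb{F}_q\rtimes\mathbb{F}_q^{\times}$ one has $U=V$, $u=q$ and $|H_V|=q-1\approx u$, not $\sqrt{u}$. What actually closes the induction there is not a bound on $|H_V|$ in terms of $\sqrt{u}$ but the relation $|G|\ge |U|\,|H_V|$, which follows because $G/R_G(V)\cong (L_V)_{\delta_V}$ has order $|V|^{\delta_V}|H_V|$; hence $\sqrt{|G/U|}\ge\sqrt{|H_V|}$, and (for $\delta=1$, $\theta=0$) the second branch gives $\alpha_U\le \frac{q^n}{q^n-1}|H_V|\le 2\sqrt{|H_V|}\cdot\sqrt{|H_V|}\le 2\sqrt{|G/U|}\,\sqrt{|U|}$, which is comparable to $C\bigl(\sqrt{|G|}-\sqrt{|G/U|}\bigr)$ once $C$ is large and small $|U|$ is handled separately. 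Without this use of the crown-based power structure (or an equivalent substitute), and without the $H^1$ bounds, the inductive step does not close; as a minor further point, the constants $b_1,b_2,b_3$ in Proposition \ref{CheboProp1} are fixed, so you cannot ``choose $b_3$ large'' --- you must instead verify (as the source does) that the given constants already make that term $o(\sqrt{|G|})$.
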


\section{Irreducible linear groups with few elements fixing a non-zero vector}\label{Lin}
Let $V$ be a finite dimensional vector space over an arbitrary field. In this section, our aim is to characterise the groups $H\le GL(V)$, such that the set of elements which fix at least one non-zero vector in $V$ has cardinality bounded above by an absolute constant. For ease of notation, we will write
$$H^{\ast}=H^{\ast}(V):=\{h\in H\text{ : }v^h=v\text{ for some }v\in V\backslash\{0\}\}$$
for such a subgroup $H$. Our main result reads as follows.
\begin{prop}\label{imprim} Let $V$ be a vector space of dimension $n$ over a field $F$, and fix a constant $c>0$. Suppose that $H$ is an irreducible subgroup of $GL(V)$ with the property that $|H^{\ast}|\le c$. Then there exists positive integers $m$ and $k$ such that $n=mk$, and $H\le R\wr \Sym(k)$, where either $|R|$ has order bounded above by a function of $|H^{\ast}|$, or $R\cong \Gamma_1(F_m)$ for some extension field $F_m$ of $F$ of degree $m$.\end{prop}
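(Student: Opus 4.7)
My plan is to split into two cases based on whether $H$ acts primitively as a linear group on $V$.

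First, suppose $H$ is imprimitive, so there is a nontrivial $H$-invariant direct sum decomposition $V = V_1 \oplus \cdots \oplus V_k$ with $k \ge 2$, the $V_i$ permuted transitively by $H$. Refining if necessary, I may assume the stabilizer $\Stab_H(V_1)$ acts primitively on $V_1$, with image $R \le \GL(V_1)$. The standard wreath embedding then gives $H \le R \wr \Sym(k)$ with $n = mk$ and $m = \dim V_1$. A short observation controls $R^{\ast}$ in terms of $H^{\ast}$: if $r \in R^{\ast}(V_1)$ fixes some $v_1 \in V_1 \setminus \{0\}$, then any lift $\tilde r \in \Stab_H(V_1)$ fixes $(v_1, 0, \ldots, 0) \in V$ and hence lies in $H^{\ast}$; since preimages of distinct $r$'s are disjoint, $|R^{\ast}(V_1)| \le |H^{\ast}|$. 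Thus the imprimitive case reduces to the primitive case applied to $R$.

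It remains to treat the primitive case: assuming $H$ is primitive and irreducible on $V$ with $|H^{\ast}| \le c$, I would show that $|H|$ is bounded by a function of $c$, or $H \le \Gamma L_1(F_n)$ for an appropriate extension $F_n/F$ of degree $n$. The plan is to exploit the tight constraint on $H^{\ast}$ to control the generalized Fitting subgroup. In positive characteristic $p = \mathrm{char}(F)$, every $p$-element of $H$ acts unipotently on $V$ and therefore fixes a nonzero vector, so the Sylow $p$-subgroup of $H$ has order at most $|H^{\ast}|+1$. For the remaining structure, Clifford theory together with a reduction via $E := \End_H(V)$ lets me assume absolute irreducibility over $E$; then, by Schur's lemma, every abelian normal subgroup of $H$ is cyclic and central in $\GL_E(V)$.

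The final step is to show that, when $|H|$ is not bounded by a function of $c$, $H$ admits an abelian normal subgroup $A$ acting \emph{irreducibly} on $V$. Once this is achieved, $C_{\GL(V)}(A)$ is a field $F_n$ with $[F_n:F] = n$ (Schur), and $H \le N_{\GL(V)}(A) \le N_{\GL(V)}(C_{\GL(V)}(A)) = \Gamma L_1(F_n)$, as required. The hard part will be establishing the existence of this irreducibly-acting abelian normal subgroup in the unbounded case. Two kinds of obstructions must be ruled out or bounded: non-abelian extraspecial factors of the Fitting subgroup, whose non-central elements tend to produce fixed-point elements once the factor grows large (here one exploits central involutions, or central $p$-elements in characteristic $p$, together with tensor-product structure on $V$); and non-trivial quasisimple components in the layer $E(H)$. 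The quasisimple case is likely the deepest, probably requiring classification-based input to show that quasisimple linear groups acting with few elements fixing a nonzero vector must have bounded order. This structural analysis is the technical core of the argument.
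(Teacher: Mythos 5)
Your reduction of the imprimitive case is correct and is essentially identical to the paper's: pass to an unrefinable system of imprimitivity $V=V_1\oplus\cdots\oplus V_k$, embed $H$ in $R\wr\Sym(k)$ with $R$ the action of $\Stab_H(V_1)$ on $V_1$, and observe that $|R^{\ast}(V_1)|\le|H^{\ast}(V)|$ because a lift of an element of $R^{\ast}(V_1)$ fixes the same nonzero vector of $V_1\subseteq V$. So far, so good.

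The genuine gap is that the primitive case --- which is the entire content of the paper's Proposition \ref{prim} and carries all the technical weight --- is left as a plan rather than a proof. You correctly identify the two obstructions (a large extraspecial normal $r$-subgroup, and quasisimple components), but you do not actually rule either out: for the extraspecial case you say non-central elements ``tend to produce'' fixed vectors, and for the quasisimple case you say it ``probably'' requires classification-based input. Neither assertion is substantiated, and the second is in fact avoidable. For the record, the paper closes these as follows. For $L=O_r(H)$ extraspecial of order $r^3$, the $r$-dimensional constituent $W$ is induced from a one-dimensional module $U$ of a maximal abelian subgroup $M$, $W=\bigoplus_{i}U\otimes x^i$, and then each $x^j$ fixes the nonzero vector $u\otimes 1+u\otimes x+\cdots+u\otimes x^{r-1}$; hence $r\le|H^{\ast}|$ and $|L|=r^3$ is bounded. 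For $L$ quasisimple, one notes that $L$ acts by conjugation on the normal subset $L^{\ast}$; either this forces $|L|$ to be bounded in terms of $|L^{\ast}|\le|H^{\ast}|$, or $L^{\ast}\le Z(H)$, and since central elements act as scalars one gets $L^{\ast}=1$, so $L$ is a perfect Frobenius complement and Zassenhaus' theorem gives $L\cong SL_2(5)$ --- no appeal to the classification of finite simple groups is needed at this point. Finally, your proposed endgame (``find an abelian normal subgroup acting irreducibly, then $H\le\Gamma L_1(F_n)$'') is the right idea but the mechanism is missing: the paper gets there by taking a maximal characteristic abelian subgroup $M$ not contained in $Z(GL_n(F))$, identifying $C_H(M)$ with a weakly quasiprimitive subgroup of $GL_{n/m}(F_m)$ over the field $F_m$ generated by $M$, and recursing until either the order is bounded or the dimension drops to $1$. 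Without these three ingredients spelled out, the argument is not yet a proof.
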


Proposition \ref{imprim} will follows almost immediately from our next result. Recall that if $F$ is a field, then an irreducible subgroup $H$ of a linear group $GL_n(F)$ is called \emph{weakly quasiprimitive} if every characteristic subgroup of $G$ is homogeneous.
\begin{prop}\label{prim} There exists a function $f:\mathbb{N}\rightarrow \mathbb{N}$ such that if $F$ is a field, $n$ is a positive integer, and $H\le GL_n(F)$ is finite and weakly quasiprimitive, then either $|H|\le f(|H^{\ast}|)$, or $H$ is a subgroup of $\Gamma L_1(F_n)$, for some extension field $F_n$ of $F$ of degree $n$.\end{prop}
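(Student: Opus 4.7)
The plan is to analyze $H$ through its maximal abelian characteristic subgroup $A$, using weak quasiprimitivity to obtain a semilinear embedding of $H$, and then argue by induction on $n$.

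First, let $A$ be a maximal abelian characteristic subgroup of $H$. By weak quasiprimitivity, $A$ acts homogeneously on $V$, so $V|_A\cong kW$ as $FA$-modules for some $A$-irreducible $W$ and integer $k\ge 1$. Since $A$ is abelian and irreducible on $W$, $K:=F[A|_W]$ is a field extension of $F$, with $W$ one-dimensional over $K$; setting $m=[K:F]$ gives $n=km$, and $V$ becomes a $k$-dimensional $K$-vector space. Since $H$ normalizes $A$, it normalizes $K=F[A]\subseteq \End_F(V)$ by $F$-algebra automorphisms, so $H\le \Gamma L_K(V)$. If $k=1$, then $\Gamma L_K(V)=\Gamma L_1(K)=\Gamma L_1(F_n)$ and we are done.

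Now suppose $k\ge 2$. Let $H_0:=H\cap GL_K(V)$; this is characteristic in $H$, has index at most $m$, and acts $K$-linearly on $V$. A short check shows that $H_0$ is weakly quasiprimitive on $V$ as a $K$-space (since characteristic subgroups of $H_0$ are characteristic in $H$), that $A$ remains the maximal abelian characteristic subgroup of $H_0$, and that $|H_0^{\ast}|\le |H^{\ast}|$. If $m\ge 2$ then $\dim_K V=k<n$, and induction on $n$ applied to $H_0$ yields either $|H_0|\le f(|H^{\ast}|)$, or $H_0\le \Gamma L_1(K')$ for some degree-$k$ extension $K'/K$. The latter would produce a characteristic cyclic subgroup of $H_0$ generating a field strictly larger than $K$, contradicting the maximality of $A$. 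Hence $|H_0|$ is bounded by a function of $|H^{\ast}|$.

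The main obstacles are the base case $m=1$ of the induction, where $A$ lies in the scalars and dimension is not reduced, and the passage from a bound on $|H_0|$ to a bound on $|H|$ (requiring control of $m=[K:F]$). Both of these reduce to the following structural statement: a weakly quasiprimitive $H\le GL_n(F)$ with $n\ge 2$ whose maximal abelian characteristic subgroup does not already embed $H$ into $\Gamma L_1(F_n)$ must contain a nonabelian characteristic subgroup $N$, and $N$ must contribute enough elements to $H^{\ast}$ to force $|H|\le f(|H^{\ast}|)$. The idea is to exploit the structure of the generalized Fitting subgroup $F^{\ast}(H)$, exhibiting a quasi-simple or extraspecial component, and then produce a quantitative lower bound on the number of its elements having nontrivial fixed subspace in $V$ (for instance via Zsigmondy-type primitive divisor arguments, or case-analysis along the Aschbacher classes). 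This quantitative structural step is the main technical difficulty, and is where the function $f$ in the statement is actually constructed.
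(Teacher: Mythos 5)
Your outer reduction is essentially the paper's: you pass to a maximal abelian characteristic subgroup, use homogeneity to realise $H$ inside $\Gamma L_K(V)$ for the field $K=F[A]$, and either land in $\Gamma L_1(F_n)$ or descend to a smaller-dimensional weakly quasiprimitive group over a larger field (the paper does this via \cite[Lemma 1.10]{LMM}, passing to $H_1=C_H(M)\le GL_{n/m}(F_m)$ whose characteristic abelian subgroups are all scalar). The problem is that you have not proved the proposition: the entire content lies in the step you defer, namely showing that a weakly quasiprimitive group all of whose characteristic abelian subgroups are scalar has order bounded in terms of $|H^{\ast}|$. You describe this as ``the main technical difficulty\dots where the function $f$ in the statement is actually constructed,'' which is an accurate self-assessment but means the function $f$ is never constructed. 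A plan to ``exploit the structure of $F^{\ast}(H)$'' with a parenthetical ``for instance via Zsigmondy-type primitive divisor arguments, or case-analysis along the Aschbacher classes'' is not an argument. The paper's actual route here is concrete: extend $F$ to a splitting field, invoke the tensor decomposition of the generalised Fitting subgroup $L$ from \cite[Lemmas 2.14--2.17]{HRD} to reduce (via the fact that each tensor factor group centralises the other factors, forcing many fixed vectors) to the case where $L$ is a single extraspecial group $r^{1+2}$ or a single quasisimple group; in the extraspecial case an explicit computation with the induced module $U\uparrow^L_M$ exhibits a nonzero vector fixed by every power of $x$, giving $r\le|H^{\ast}|$; in the quasisimple case, $L$-invariance of $L^{\ast}$ forces $L^{\ast}=1$, so $L$ is a perfect Frobenius complement and Zassenhaus' theorem gives $L\cong SL_2(5)$. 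Self-centralisation of $L=F^{\ast}(H)$ then bounds $|H|$. None of this appears in your proposal.

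Two secondary points. First, your dismissal of the inductive alternative $H_0\le\Gamma L_1(K')$ as ``contradicting the maximality of $A$'' is not sound: $H_0\le\Gamma L_1(K')$ does not hand you an abelian \emph{characteristic} subgroup of $H$ generating a field larger than $K$ (the relevant cyclic subgroup of $\Gamma L_1(K')$ need not meet $H_0$ in a characteristic subgroup of $H$, and in any case this branch can legitimately occur --- it should feed into the conclusion $H\le\Gamma L_1(F_n)$ rather than into a contradiction). Second, even granting a bound on $|H_0|$, passing to a bound on $|H|$ requires bounding $m=[K:F]$; the paper gets this for free because $H_1$ is self-centralising and normal, a point you flag but do not resolve.
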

\begin{proof} If $n=1$, then $\Gamma L_n(F)=GL_n(F)$. Thus, we may assume that $n>1$. Fix a subgroup $H$ of $GL_n(F)$. We want to prove that if $H$ is not a subgroup of $\Gamma L_1(F_n)$ for some extension field $F_n$ of $F$ of degree $n$, then $|H|$ is bounded in terms of $|H^{\ast}|$.
 
Suppose first that every characteristic abelian subgroup of $H$ is contained in $Z(GL_n(F))$. Let $L$ be the generalised Fitting subgroup of $H$. Our aim is to prove that $|L|$ is bounded above in terms of $|H^{\ast}|$. Since $L$ is self-centralising, this will show that $|H|$ is bounded above in terms of $|H^{\ast}|$, which will give us what we need.

To this end, extend the field $F$ so that $F$ is a splitting field for all subgroups of $L$. Then $L$ may longer be homogeneous, but its irreducible constituents are algebraic conjugates of each other, so $L$ acts faithfully on them. Let $W$ be such a constituent, and let $r_i$, $m_i$, $s_i$, $t_i$, $S_i$ and $T_i$ be as in \cite[Lemma 2.14]{HRD}. By \cite[Lemmas 2.15, 2.16 and 2.17]{HRD}, $W$ decomposes as a tensor product $$W=W_{Z}\otimes W_{r_1}\otimes\hdots\otimes W_{r_a}\otimes W_{s_1}\otimes\hdots\otimes W_{s_b},$$ where $W_Z$ is a $1$-dimensional module for $Z$; $W_{r_i}$ is an irreducible module for $O_{r_i}(G)$ of dimension $r_i^{m_i}$; and $W_{s_i}$ is an irreducible module for $T_i$ of dimension $s_i^{t_i}$. In particular, $[O_{r_i}(H),W_{r_j}]=[T_i,W_{s_j}]=1$ for $i\neq j$, and $[O_{r_i}(H),W_{s_j}]=[T_i,W_{r_j}]=1$, for all $i$, $j$. Hence, if $a+b>1$, then $|L|$ is bounded above in terms of $|H^{\ast}|$, as needed. So we may assume that either $L=Z(G)\circ O_r(H)$, for some prime $r$, or $L=Z(G)\circ T$ is a central product of $t$ copies of a quasisimple group $S$. If $Z(G)\not\le O_r(H)$ in the first case, or $Z(G)\not\le T$ in the second case, then the same argument as above gives that $|L|$ is bounded in terms of $|H^{\ast}|$.

So we may assume that either $L=O_r(H)$, for some prime $r$, or $L=T$ is a central product of $t$ copies of a quasisimple group $S$. Hence, $W$ is a tensor product of 
$m$ [respectively $t$] copies of an irreducible module for an extraspecial group of order $r^3$ [resp. quasisimple group]. Thus, by arguing as in the paragraph above, we can immediately reduce to the case $m=1$ [resp. $t=1$].

Suppose first that $L=O_r(H)=M\rtimes\langle x\rangle$ is extraspecial of order $r^3$, for a prime $r$, where $M$ is cyclic of order $r^2$ if $L$ has exponent $r^2$, and $M$ is elementary abelian of order $r^2$ otherwise. Then, being an absolutely irreducible module for $L$ of dimension $r$, $W$ is isomorphic to $U\uparrow^L_M$, where $U$ is a one dimensional module for $M$ in which $Z(L)$ acts non-trivially. Hence, we may write $W=\bigoplus_{i=0}^{r-1}U\otimes x^i$. It follows that for each non-zero vector $u\in U$, $x^j$ fixes the non-zero vector $u\otimes 1+u\otimes x+\hdots+ u\otimes x^{r-1}$. Thus, $r\le |H^{\ast}|$, from which it follows that $|L|=r^3$ is bounded above in terms of $|H^{\ast}|$, as needed. 

Finally, assume that $L$ is quasisimple. Since $L$ acts on $L^{\ast}$ by conjugation, we may assume that $L^{\ast}\le Z$ (otherwise $L\le \Sym(L^{\ast})$, which would imply that $|L|$ is bounded above in terms of $|H^{\ast}|$). However, since $Z=Z(H)\le Z(GL_n(F))$, $Z$ acts on $V$ by scalar multiplication. Hence, $Z\cap H^{\ast}=1$. It follows that $L^{\ast}=1$, and hence that $L$ is a Frobenius complement in the group $V\rtimes L$. Since $L$ is perfect, it now follows from Zassenhaus' Theorem that $L\cong SL_2(5)$. Whence, $|L|$ is bounded, and this prove sour claim.

Finally, assume that $H$ has a characteristic abelian subgroup not contained in $Z(GL_n(F))$, and let $M\le H$ be maximal with this property. Then by \cite[Lemma 1.10]{LMM}, $M$ is contained in $Z(GL_{\frac{n}{m}}({F}_{m}))$ for some $m$ dividing $n$, and some extension field $F_m$ of $F$ of degree $m$. Hence, $H_1:=C_H(M)$ is a subgroup of $GL_{\frac{n}{m}}(F_m)$ with the property that every characteristic abelian subgroup of $H_1$ is contained in $Z(GL_{\frac{n}{m}}(F_m))$. Furthermore, $H_1$ is weakly quasiprimitive, since it is characteristic in $H$. Also, the group $H/H_1$ is naturally embedded in $\Gal(F_m/F)$, its action induced by a vector space isomorphism $F_m^{\frac{n}{m}}\rightarrow F^n$. Since $H_1^{\ast}(F_m^{\frac{n}{m}})=H_1^{\ast}(F^{n})$, it follows from the arguments above that either $|H_1|$ is bounded in terms of $|H^{\ast}|$; or $n=1$. If $|H_1|$ is bounded in terms of $|H^{\ast}|$, then so is $|H|$, since $H_1$ is self-centralising and normal in $H$. If $n=1$, then $H_1\le GL_1(F_n)$, so $H\le \Gamma L_1(F_n)$, since $H/H_1$ acts on $M=Z(H_1)$ via the Galois group, as described above. This completes the proof.\end{proof}  

Finally, we prove Proposition \ref{imprim}.
\begin{proof}[Proof of Proposition \ref{imprim}] If $H$ is primitive, then the result follows immediately from Proposition \ref{prim}. Thus, we may assume that $H$ is not primitive. Then $V$ may be decomposed into a system $V=W_1\oplus W_2\oplus \hdots \oplus W_k$ of imprimitivity for $H$. Let $\Gamma:=\{W_1,\hdots,W_k\}$, let $S:=H^{\Gamma}$ denote the induced (transitive) action of $H$ on $\Gamma$, and let $R:=\Stab_H(W_1)^{W_1}$ denote the induced action of $\Stab_H(W_1)$ on $W_1$. Then $H$ is isomorphic to a subgroup of the wreath product $R\wr S$.

Finally, since $\Stab_H(W)$ induces $R$ on $W$, we have $|R^{\ast}(W_1)|\le |H^{\ast}(V)|$. Hence, Proposition \ref{prim} implies that either $R\le \Gamma L_1(F_m)$, for some extension $F_m$ of $F$ of degree $m$, or $|R|$ is bounded above by a function of $|H^{\ast}|$. This completes the proof.\end{proof}    

\section{The proof of Theorem \ref{MainTheorem}}\label{MainProof}
We begin our preparations towards the proof of Theorem \ref{MainTheorem} with a lemma concerning the cohomology of an irreducible linear group which has a bounded number of elements fixing a non-zero vector.
\begin{lemma}\label{cohom} There exists an absolute constant $c$ such that if $V$ is a vector space of dimension $n$ over a field $F$ of characteristic $p>0$, and $H$ is an irreducible subgroup of $GL(V)$ with the property that $|H|>\sqrt{|V|}$, then $2^{m}\le c|H^{\ast}|^4$, where $m:=\dim_{F}\Hc^{1}(H,V)$ and $F:=\End_H{V}$.\end{lemma}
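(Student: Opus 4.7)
The plan is to combine the structural dichotomy of Proposition~\ref{imprim} with Shapiro's lemma and an inflation-restriction argument. Write $q = |F|$, so that $|V| = q^n$ and $|\Hc^1(H,V)| = q^m$, and recall that standard bounds on the cohomology of faithful irreducible modules (of Aschbacher--Guralnick type) give $|\Hc^1(H,V)| \le |V|$. Hence $2^m \le q^m \le |V|$, so if $|H^{\ast}|^4 \ge |V|$ the claim is immediate; we may therefore assume $|H^{\ast}|^4 < |V|$, and in particular treat $|H^{\ast}|$ as ``small'' enough to activate Proposition~\ref{imprim}.

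Applying Proposition~\ref{imprim} yields a decomposition $V = W_1 \oplus \cdots \oplus W_k$ into blocks of imprimitivity with $\dim_F W_i = m_0$ and $H \le R \wr \Sym(k)$, where the induced action $R$ of $\Stab_H(W_1)$ on $W_1$ either has order bounded by a function of $|H^{\ast}|$, or is contained in $\Gamma L_1(F_{m_0})$ for some extension $F_{m_0}/F$ of degree $m_0$. Since $H$ permutes the blocks transitively, $V$ is the induced $H$-module from the $H_1$-module $W_1$, where $H_1 = \Stab_H(W_1)$. Shapiro's lemma then gives
\[
\Hc^1(H,V) \;\cong\; \Hc^1(H_1, W_1).
\]
Letting $K = \ker\bigl(H_1 \to \GL(W_1)\bigr)$, the inflation-restriction sequence is
\[
0 \;\to\; \Hc^1(H_1/K,\, W_1) \;\to\; \Hc^1(H_1, W_1) \;\to\; \mathrm{Hom}(K^{\mathrm{ab}},\, W_1)^{H_1/K}.
\]
The leftmost term vanishes in the $\Gamma L_1$ case, since a nontrivial scalar intersection $H_1/K \cap F_{m_0}^{\times}$ acts fixed-point-freely on $W_1 \setminus \{0\}$ (applying inflation-restriction a second time with this scalar subgroup and using the coprimality of $|F_{m_0}^{\times}|$ with the characteristic); in the bounded-$|R|$ alternative it is bounded polynomially in $|H^{\ast}|$.

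The main obstacle is the cokernel term $\mathrm{Hom}(K^{\mathrm{ab}}, W_1)^{H_1/K}$. Here $K$ sits inside $R \wr \Sym(k-1)$ (the stabilizer of the remaining $k-1$ blocks), so $K^{\mathrm{ab}}$ can a priori be large; moreover one must ensure the equivariance constraint really cuts the homomorphisms down. I would control this by combining the $H_1/K$-equivariance with the hypothesis $|H| > \sqrt{|V|}$, which prevents $k$ from being too large relative to $m_0$ and so restricts the admissible equivariant quotients of $K^{\mathrm{ab}}$ into $W_1$. Tracking the resulting powers of $|H^{\ast}|$ through the two terms of the inflation-restriction sequence should deliver $2^m \le c|H^{\ast}|^4$, with the exponent $4$ arising from roughly quadratic contributions from each of the two pieces.
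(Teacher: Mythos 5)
Your proposal takes a genuinely different route from the paper, which never applies Proposition~\ref{imprim} in the proof of Lemma~\ref{cohom}. The paper instead invokes the structural result \cite[Lemma 9]{AL}: if $m>0$, then $H$ has a unique minimal normal subgroup $N$, which is non-abelian, every component $S$ of $H$ satisfies $C_H(S)\subseteq H^{\ast}$, and $m\le \dim_F \Hc^1(S,W)$ for a suitable irreducible $N$-submodule $W$. Combined with Guralnick--Hoffman's $m\le n/2$ and the hypothesis $|H|>\sqrt{|V|}$, this reduces to $H$ almost simple with socle $S$, and then CFSG-type estimates (Landazuri--Seitz, Seitz--Zalesskii, Tiep on minimal degrees; Liebeck--Pyber--Shalev giving $|S|\ge |\Aut(S)|^{4/5}$; Kimmerle--Lyons--Sandling--Teague giving $|H|_p>|S|^{1/3}$) produce $|H^{\ast}|\ge |H|_p > |H|^{1/4}\ge 2^{m/4}$, which is exactly where the exponent $4$ comes from. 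Your route via imprimitivity, Shapiro's lemma and inflation--restriction bypasses all of this and, if it worked, would be considerably more elementary; but it does not close.

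The gap is precisely the one you flag yourself: the cokernel term $\mathrm{Hom}(K^{\mathrm{ab}},W_1)^{H_1/K}$ of the inflation--restriction sequence is not bounded. This is not a technicality you can wave away by invoking $|H|>\sqrt{|V|}$. If $H=R\wr\Sym(k)$ with $R\le\Gamma L_1(F_{m_0})$, then $K$ contains a copy of $R^{k-1}$, so $K^{\mathrm{ab}}$ can be large and the equivariance constraint must do essentially all of the work; you have not verified that it does, nor that whatever survives is polynomial in $|H^{\ast}|$, let alone of the right degree. Note also that in the $\Gamma L_1$ case $H_1/K$ is solvable, so by the very structural fact the paper leans on (\cite[Lemma 9]{AL}), $\Hc^1(H_1/K,W_1)=0$ automatically and \emph{all} of $\Hc^1(H_1,W_1)$ lives in the cokernel; your argument therefore hinges entirely on the piece you have not controlled. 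Two smaller issues: the reduction ``assume $|H^{\ast}|^4<|V|$ so $|H^{\ast}|$ is small enough to activate Proposition~\ref{imprim}'' is a misreading, since that proposition applies to any irreducible $H$ (the constant $c$ there is arbitrary) and grants no smallness; and your claim that the inflation term vanishes in the $\Gamma L_1$ case needs the scalar intersection $H_1/K\cap F_{m_0}^{\times}$ to be nontrivial, which is not automatic. As it stands, this is a strategy outline with the central estimate missing, not a proof.
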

\begin{proof} Clearly we may assume that $m>0$. Then, it is proven in \cite[Lemma 9]{AL} that \begin{enumerate}[(1)]
\item $H$ has a unique minimal normal subgroup $N$, which is non-abelian.
\item If $S$ is a component of $H$, then $C_H(S)\subseteq H^{\ast}$.
\item If $W$ is an irreducible $N$-submodule of $V$ not centralised by $S$, then $m\le \dim_{F}\Hc^{1}(S,W)$.\end{enumerate}
Write $N=S_1\times\hdots\times S_t\cong S^t$, and view $H$ as a subgroup in the wreath product $\Aut(N)=\Aut(S)\wr K$, where $K$ denote the induced action of $H$ on the components in $N$. Suppose first that $t>1$. Then (2) implies that $S_i\subseteq H^{\ast}$ for all $i$. Hence, $|H^{\ast}|\geq 1+t(|S|-1)$. Also, $|H^{\ast}|\geq C_H(S_1)\geq |H\cap B||\Stab_K(1)|=|H\cap B|\frac{|K|}{t}$, where $B:=\Aut(S_2)\times\hdots\times\Aut(S_t)$. Note that $|H|\le |H\cap B||\Aut(S)||K|$. It follows that $|H|\le |H^{\ast}|t|\Aut(S)|\le |H^{\ast}|t(|S|-1)^2\le |H^{\ast}|^3$. 

Next, it is shown by Guralnick and Hoffman in \cite[Theorem 1]{GH} that $m\le \frac{n}{2}$. Since we also have $|H|>\sqrt{|V|}$, it follows that 
$$m\le \frac{n}{2}\le \log{\sqrt{|V|}}<\log{|H|}\le\log{|H^{\ast}|^3}.$$
Thus, we may assume that $H\le \Aut(S)$ is almost simple. Before distinguishing cases, we make some remarks. First, $p=\charac{F}$ divides $|H|$, since $\Hc^{1}(H,V)\neq 0$. Furthermore, $|H^{\ast}|\geq |H|_p$, since every element of a Sylow $p$-subgroup of $H$ fixes a non-zero vector in $V$. Finally, note that we may assume that $S$ is not sporadic, since there are a bounded number of such groups having an irreducible module with non-zero cohomology. 

Thus, we have two cases.\begin{enumerate}[(a)]
\item $S\cong \alt(k)$. In this case, we have $\frac{n}{2}\le \log{\sqrt{|V|}}\le \log{|H|}\le k\log{k}$, as long as $k>6$. Hence, by \cite[Proof of Proposition 10]{AL}, we have $m\le 4\log{k}$ and $|H|_p>\frac{k}{2}$, if $k$ is large enough. Hence $2^m\le k^4\le 16|H^{\ast}|^4$ in this case. If $k$ is bounded, then $m$ is also bounded, since $m\le \frac{n}{2}\le \log{|H|}$. Hence, the result also follows in this case.
\item $S\cong ^{\epsilon}X_k(r)$ is a group of Lie type. Write $R_F(S)$ for the smallest degree of a non-trivial irreducible representation of $S$ over the field $F$. If $\charac{F}$ is different to the defining characteristic for $S$, then we have $p^{\frac{R_F(S)}{2}}>|\Aut(S)|$ for $|S|$ large enough (see \cite{LS,SZ,Tiep}). Since $\sqrt{|V|}\le |H|$, we conclude that either $|S|$ is bounded, or $\charac{F}$ coincides with the defining characteristic of $S$. In the latter case, we have $|H|_p>|S|^{\frac{1}{3}}$ by \cite[Proposition 3.5]{KLST}. Also, $|S|\geq |\Aut(S)|^{\frac{4}{5}}$ by \cite[Proposition 4.4]{LPS}. Hence, 
$$|H^{\ast}|>|S|^{\frac{1}{3}}\geq |\Aut(S)|^{\frac{4}{15}}>|H|^{\frac{1}{4}}\geq 2^{\frac{m}{4}}.$$
Thus, either $|S|$ is bounded, or $2^m\le |H^{\ast}|^4$. This gives us what we need.\end{enumerate}\end{proof}

Next, we prove a reduction lemma.
\begin{lemma}\label{Reduction} Fix a constant $\alpha>0$. There exists absolute constants $b=b(\alpha)$, $c=c(\alpha)$ and $c_i=c_i(\alpha)$, $1\le i\le 4$, depending only on $\alpha$, such that: If $G$ is a finite group with trivial Frattini subgroup with the property that $C(G)>\alpha\sqrt{|G|}$, and $U$ is as in Lemma \ref{corona}, then one of the following holds.
\begin{enumerate}[(i)]
\item $U$ is non-abelian and $|G|\le b$.
\item $U$ is abelian and $|U|\le c$.
\item $U$ is abelian and $G$ has a factor group $\ol{G}$ such that\begin{enumerate}[(a)]
\item $\ol{G}\cong V\rtimes H$, with $V\cong U$ an abelian chief factor of $G$, and $H\le GL(V)$;
\item $|H^{\ast}(V)|\le c_1$;
\item $\dim_{\End_H V}{\Hc^{1}(H,V)}\le c_2$; and
\item $c_3|V|\le |H|\le c_4|V|$.
\end{enumerate}\end{enumerate}\end{lemma}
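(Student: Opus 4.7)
My plan is to split according to whether the $U$ supplied by Lemma \ref{corona} is abelian, applying Proposition \ref{CheboProp1} in the non-abelian case and Proposition \ref{CheboProp} in the abelian case. In each, I combine with Theorem \ref{CheboThm} applied to the smaller group $G/U$ to either bound $|G|$ directly or extract the structure in (iii).

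For the non-abelian case, I first observe that the error $E$ in Proposition \ref{CheboProp1} satisfies $(1-b_2/\log|G|)^{\lceil b_3(\log|G|)^2\rceil}\le|G|^{-b_2b_3}$, so $E=O((\log|G|)^2)$ once $b_2b_3>3/2$. Combining with Theorem \ref{CheboThm} gives
\[
(\alpha-C/\sqrt{|U|})\sqrt{|G|}=O((\log|G|)^2).
\]
Since $|U|\ge60$, for $|U|\ge(2C/\alpha)^2$ the coefficient on the left is at least $\alpha/2$ and $|G|$ is bounded. The leftover finite range $60\le|U|<(2C/\alpha)^2$ requires a supplementary argument, presumably by iterating the reduction to $G/U$ while tracking the trivial-Frattini hypothesis; this remainder is the main technical obstacle for case (i).

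For the abelian case, $U\cong V^{\delta}$ for the chief factor $V$. The analogous comparison using Proposition \ref{CheboProp} yields $(\alpha-C/\sqrt{|U|})\sqrt{|G|}<\alpha_U$. Setting $c(\alpha):=(2C/\alpha)^2$ places $|U|\le c$ in case (ii); otherwise $\alpha_U>(\alpha/2)\sqrt{|G|}$, and both expressions
\[
\alpha_U^{(1)}:=(\delta\theta+m+q/(q-1))|H|/|H^{\ast}|\quad\text{and}\quad\alpha_U^{(2)}:=(\lceil\delta\theta/n\rceil+q^n/(q^n-1))|H|
\]
of which $\alpha_U$ is the minimum must exceed $(\alpha/2)\sqrt{|G|}$. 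I would take $\bar G:=L_V=V\rtimes H$, always a quotient of $G$ because $V$ is a non-Frattini chief factor, which gives (iii)(a).

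The remaining conditions (iii)(b)--(d) follow from the two inequalities above. Using $|G|\ge|V|^{\delta}|H|$, the bound on $\alpha_U^{(2)}$ produces a lower estimate $|H|\ge c_3|V|$, which both yields half of (iii)(d) and ensures $|H|>\sqrt{|V|}$, so that Lemma \ref{cohom} applies and gives $2^m\le c|H^{\ast}|^4$. The bound on $\alpha_U^{(1)}$ gives an upper estimate $|H^{\ast}|\le 2(\delta\theta+m+q/(q-1))\sqrt{|H|}/(\alpha|V|^{\delta/2})$, which bootstrapped against Lemma \ref{cohom} caps both $|H^{\ast}|$ and $m$ absolutely, yielding (iii)(b)--(c). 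Finally, Proposition \ref{imprim} applied with $|H^{\ast}|$ bounded embeds $H$ in $R\wr\Sym(k)$ with $R\le\Gamma L_1(F_m)$ (or $|R|$ bounded), from which one deduces the matching upper bound $|H|\le c_4|V|$ of (iii)(d); carrying out this bootstrap uniformly over $\delta$, $n$ and $k$ is the most delicate piece of the abelian case.
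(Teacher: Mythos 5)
Your strategy is the same as the paper's: Proposition \ref{CheboProp1} plus Theorem \ref{CheboThm} in the non-abelian case, Proposition \ref{CheboProp} in the abelian case, and then a bootstrap of $\alpha_U$ against $|H|\le |H^{\ast}|\,|V|$ and Lemma \ref{cohom}. Your abelian case is essentially the paper's argument, and the two pieces you defer are handled there as follows. The ``uniformity over $\delta$'' is disposed of at the outset: combining (\ref{alpham}) with $m\le n/2$ and $|H|/|H^{\ast}|\le|V|$ gives $|V|^{(\delta-1)/2}\le C_1(n+\delta)$, which is impossible once $|U|=q^{n\delta}$ exceeds a constant $c(\alpha)$; after that one works with $\delta=1$, $\theta=0$, and your bootstrap of $|H^{\ast}|\le 2(m+2)\sqrt{|H^{\ast}|}/\alpha$ against $2^m\le c|H^{\ast}|^4$ is exactly what the paper does. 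The upper bound in (iii)(d) does not need Proposition \ref{imprim}: it is immediate from $|H|\le|H:H_v|\,|H_v|\le|V|\,|H^{\ast}|\le c_1|V|$. Indeed the wreath-product embedding could not deliver it on its own, since $|\Gamma L_1(q^m)\wr\Sym(k)|$ exceeds $|V|$ by a factor of order $m^k k!$, and the bound on $k$ is only established later, in the proof of Theorem \ref{MainTheorem}, via the orbit-counting argument.

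The gap you flag in case (i) is genuine, and you should be aware that the paper does not fill it: its proof writes $C(G/U)\le C\sqrt{|G/U|}$ and then asserts $\sqrt{|G|}\le\alpha'(\cdots)$, a deduction which requires $\alpha-C/\sqrt{|U|}$ to be bounded below by a positive constant, whereas $U$ non-abelian only guarantees $|U|\ge 60$. The regime is non-empty: for $G=\alt(5)\times(\mathbb{F}_q\rtimes\mathbb{F}_q^{\times})$ the pair $V=U=\alt(5)$ satisfies the conclusion of Lemma \ref{corona} (here $R_G(V)=\mathbb{F}_q\rtimes\mathbb{F}_q^{\times}$ and $I_G(V)=G$), while $C(G)\ge C(\mathbb{F}_q\rtimes\mathbb{F}_q^{\times})\sim\sqrt{|G|}/\sqrt{60}$ with $|G|$ unbounded; so for $\alpha$ below roughly $C/\sqrt{60}$ conclusion (i) cannot be proved for an arbitrary choice of $(V,U)$. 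The supplementary argument you anticipate is therefore necessary, and the natural fix is the one the paper effectively deploys one level up, in the induction of Theorem \ref{MainTheorem}: when $|U|$ is too small to absorb $C/\sqrt{|U|}$, pass to $G/U$ with the rescaled constant $\alpha\sqrt{|U|}/(1+b_4)$ rather than trying to bound $|G|$ inside the lemma. As written, neither your case (i) nor the paper's is complete for small $\alpha$, so you cannot discharge your remainder by citing the paper's proof verbatim.
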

\begin{proof} Adopt in its entirety the notation of Proposition \ref{CheboProp}, so that $U$, $V$ and $R=R_G(V)$ are as in Lemma \ref{corona}. We first consider the case where $V$ is non-abelian. Then by Proposition \ref{CheboProp1} we have 
$$\alpha\sqrt{|G|}< C(G/U)+\lceil b_{3}(\log{|{G}|})^{2}\rceil +\frac{b_{1}}{b_{2}}\sqrt{|{G}|^{3}}\log{|G|}(1-b_{2}/\log{|{G}|})^{\lceil b_{3}(\log{|{G}|})^{2}\rceil},$$
where $b_1$, $b_2$ and $b_3$ are the absolute constants from Proposition \ref{CheboProp1}. Since $C(G/U)\le C\sqrt{|G/U|}$, it follows that $\sqrt{|G|}\le \alpha'\lceil b_{3}(\log{|{G}|})^{2}\rceil +\frac{b_{1}}{b_{2}}\sqrt{|{G}|^{3}}\log{|G|}(1-b_{2}/\log{|{G}|})^{\lceil b_{3}(\log{|{G}|})^{2}\rceil}$, for some constant $\alpha'$ depending only on $\alpha$. Hence, since the square root of $|G|$ divided by the right hand side of the above equation tends to $\infty$ as $|G|$ tends to infinity, we must have that $|G|$ is bounded above by a constant $b=b(\alpha)$ depending only on $\alpha$.

Thus, we may assume that $U$ is abelian. Then by Proposition \ref{CheboProp} and Theorem \ref{CheboThm}, there exists an absolute constant $C$ such that
\begin{align*} \alpha\sqrt{|G|}\le C(G)\le C(G/U)+\alpha_U \le c\sqrt{\frac{|G|}{|U|}}+\alpha_U. \end{align*}
In particular, using the definition of $\alpha_U$ from Proposition \ref{CheboProp}, we conclude that
\begin{align}\label{alpham} \alpha \le \frac{c}{\sqrt{|U|}}+\left(\delta\cdot \theta+m+2\right)\frac{\sqrt{|H|}}{\sqrt{|V|^{\delta}}|H^{\ast}|}\text{, and}\end{align}
\begin{align}\label{alphan} \alpha \le \frac{c}{\sqrt{|U|}}+\left(\left\lceil\frac{\delta\cdot \theta}{n}\right\rceil+2 \right)\frac{\sqrt{|H|}}{\sqrt{|V|^{\delta}}}. \end{align}
We claim first that $\delta=1$. Indeed, assume otherwise, and note that $\frac{|H|}{|H^{\ast}|}\le |H|/|H_v|\le |V|$, for any non-zero $v\in V$. Hence, since $m\le \frac{n}{2}$, we conclude from (\ref{alpham}) that
\begin{align}\label{Con1} |V|^{\frac{\delta-1}{2}}\le C_1(n+\delta), \end{align}
where $C_1=C_1(\alpha)$ depending only on $\alpha$.
Now, since $|U|=|V|^{\delta}=q^{n\delta}$, we conclude that there exists a constant $c=c(\alpha)$ such that if $|U|>c$ and $\delta>1$ then $|V|^{\frac{\delta-1}{2}}> C_1(n+\delta)$. 

Hence, we may assume that $\delta=1$. We will first prove that the properties (b) and (c) of Part (iii) of thge statement of the lemma hold in the factor group $\ol{G}:=G/R_G(V)$. If $|H|\le \frac{|V|}{n^2}$, then (\ref{alpham}) [respectively (\ref{alphan})] implies that $|H^{\ast}|$ [resp. $n$] is bounded above by a constant depending only on $\alpha$. Properties (b) and (c) then follow immediately. 

So we may assume that $|H|> \frac{|V|}{n^2}$. We then use (\ref{alpham}) and the fact that $|H|/|H_v|\le |V|$ to deduce that $|H^{\ast}|\le C_2(1+m^2)$, where $C_2=C_2(\alpha)$ is a constant depending only on $\alpha$. Since $|H|>\sqrt{|V|}$, if follows from Lemma \ref{cohom} that $|H^{\ast}|\le C_3(1+\log{|H^{\ast}|}^2)$, where $C_3=C_3(\alpha)$ is a constant depending only on $\alpha$. It follows that $|H^{\ast}|$, and hence $m$, are bounded above by constants depending only on $\alpha$. This proves that Properties (b) and (c) hold.

Finally, the existence of $c_3$ follows immediately from (\ref{alphan}), while the existence of $c_4$ follows from (\ref{alpham}) and the bound $|H|/|H^{\ast}|\le |V|$. This proves that Property (d) holds, and completes the proof.\end{proof}

We are now ready to prove Theorem \ref{MainTheorem}.
\begin{proof}[Proof of Theorem \ref{MainTheorem}] Let $C$ be the constant from Theorem \ref{CheboThm}; let $f$ be the function from Proposition \ref{prim}; let $b_1$, $b_2$ and $b_3$ be the constants from Proposition \ref{CheboProp1}; and let $b=b(\alpha)$ and $c=c(\alpha)$ be the constants from Lemma \ref{Reduction}. Also, let $c_i$, $1\le i\le 4$, be the functions of $\alpha$ from Lemma \ref{Reduction}. Note that we may assume that $f$, $c_1$, $c_2$ and $c_4$ are increasing functions, while $c_3$ is decreasing. Hence, we may also assume that $g$ satisfies $g(\alpha_1\alpha_2)\geq g(\alpha_1)\alpha_2$, for $g\in\{f,c_1\}$. For ease of notation, we will sometimes write $c_i$ in place of $c_i(\alpha)$.

Set $b_4:=\max\{b,\lceil b_{3}(\log{b})^{2}\rceil +\frac{b_{1}}{b_{2}}\sqrt{b^{3}}\log{b}(1-b_{2}/\log{b})^{\lceil b_{3}(\log{b})^{2}\rceil}\}$; $\alpha':=\max\{\alpha,C\}$; $c_5:=\max\{c,\frac{1}{c_3(\alpha')}f(\lfloor c_1(\alpha')\rfloor)^{\frac{c_1(\alpha')}{c_3(\alpha')}}\lfloor \frac{c_1(\alpha')}{c_3(\alpha')}\rfloor!\}$; and $c_6:=(2+c_2)c_5$. Then define \begin{align*}\delta(\alpha):= &\min\{f(\left\lfloor c_1(\beta)\right\rfloor)\text{ : }0<\beta\le \alpha'\}\text{ and }\\
k(\alpha):= &\frac{c_1(\alpha')}{{c_3(\alpha')}}. \end{align*} Finally, set $\beta:=c_3$ and $\gamma:=c_4$. Note that by construction $k$ is an increasing function of $\alpha$, and that 
\begin{align}\label{Imp}\delta(\beta\sqrt{u})\geq\delta(\beta) \sqrt{u}\geq \delta(\alpha)\sqrt{u},\end{align}
whenever $\beta\le \alpha$.

We will now prove by induction on $|G|$ that $G$ has a factor group $\ol{G}$ such that\begin{enumerate}[(i)]
\item $\ol{G}\cong V\rtimes H$, with $V\cong \mathbb{F}_q^{k}$, and $H\le \Gamma L_1(q)\wr\Sym(k)$, with $q$ a prime power and $k\le k(\alpha)$;
\item $|\ol{G}|\geq \delta(\alpha)\sqrt{|G|}$; and
\item $\beta({\alpha})|V|\le |H|\le \gamma({\alpha})|V|$.
\end{enumerate}

Suppose first that $\Frat(G)=1$, and let $U$, $V$ and $R=R_V(G)$ be as in Lemma \ref{corona}. We would like to reduce to the case where $|G|>b$ if $V$ is non-abelian, and $|U|>c_5$ if $V$ is abelian. We first deal with the non-abelian case. So assume that $V$ is non-abelian and that $|G|\le b$. In this case, we have $$\alpha\sqrt{|G|}< C(G/U)+b_4\le (1+b_4)C(G/U),$$ 
by Proposition \ref{CheboProp1}. In particular, it follows that $C(G/U)> \alpha_1\sqrt{|G/U|}$, where 
$$\alpha_1:=\frac{\alpha\sqrt{|U|}}{1+b_4}.$$
Note that $\gamma(\alpha_1)\le \gamma(\alpha)$, since $\alpha_1\le \alpha$, and $\gamma$ is an increasing function. Similarly, $k(\alpha_1)\le k(\alpha)$ and $\beta(\alpha)\le \beta(\alpha_1)$. Furthermore, $\delta(\alpha_1)\geq \delta(\alpha)\sqrt{|U|}$ by (\ref{Imp}). The inductive hypothesis now implies that $G$, and hence $G/U$, has a factor group $\ol{G}$ with the desired properties.  

Next, assume that $V$ is abelian, and that $|U|\le c$. Then since $\alpha_U\le c_6$, Proposition \ref{CheboProp} yields $C(G/U)> \alpha_2\sqrt{|G/U|}$, where 
$$\alpha_2:=\frac{\alpha\sqrt{|U|}}{1+c_6}.$$
As above, it now follows from the inductive hypothesis and the definitions of $\delta(\alpha)$ and $k(\alpha)$ that $G$ has a factor group $\ol{G}$ with the desired properties.    

Thus, we may assume that $|G|>b$ if $U$ is non-abelian, and $|U|>c_5\geq c$ otherwise. However, Lemma \ref{Reduction} then implies that $U$ must be abelian, and that $G$ has a factor group $\ol{G}$ such that\begin{enumerate}[(a)]
\item $\ol{G}\cong V\rtimes H$, with $V\cong U$ an abelian chief factor of $G$, and $H\le GL(V)$;
\item $|H^{\ast}(V)|\le c_1(\alpha)$;
\item $\dim_{\End_HV}\Hc^{1}(H,V)\le c_2(\alpha)$; and
\item $c_3(\alpha)|V|\le |H|\le c_4(\alpha)|V|$.
\end{enumerate} 
Furthermore, Lemma \ref{imprim} guarantees the existence of positive integers $m$ and $k$, and a transitive permutation group $S$ of degree $k$, such that $n=mk$ and $H\le R\wr S$, with either $|R|\le f(c_1)$, or $R\le \Gamma L_1(p^m)$. Hence, we just need to prove that $k\le k({\alpha})$. Indeed, if this is true then we must have $R\le \Gamma L_1(p^m)$, since otherwise $|V|\le \frac{1}{c_3(\alpha)}|H|\le \frac{1}{c_3(\alpha)}f(c_1(\alpha))^{\frac{c_1(\alpha)}{c_3(\alpha)}}\lfloor \frac{c_1(\alpha)}{c_3(\alpha)}\rfloor!$, contradicting $|U|>c_5$. 

Now, note that (b) and (d) above imply that the number of orbits of $H$ in its action on $V$ is bounded above by $1+\frac{c_1}{c_3}$. Hence, the number of orbits of $X:=GL_m(p)\wr \Sym(k)$ is bounded above by $1+\frac{c_1}{c_3}$. Then since $GL_m(p)$ has $2$ orbits in its action on the natural module $(\mathbb{F}_p)^m$, it follows that the number of orbits of $X$ on $V$ is precisely the number of orbits of $\Sym(k)$ in its action on the $k$-fold cartesian power $\{0,1\}^k$ by permutation of coordinates. This number is precisely $k+1$. Hence, we have $k+1\le 1+\frac{c_1}{c_3}$, and this completes the proof in the case $\Frat(G)=1$.

Finally, assume that $\Frat(G)>1$. Then $C(G/\Frat(G))=C(G)>\beta\sqrt{|G/\Frat(G)|}$, where $\beta:=\alpha\sqrt{|\Frat(G)|}$. Now, since $\alpha\sqrt{|G|}<C(G/\Frat(G))\le C\sqrt{|G/\Frat(G)|}$, we have $|\Frat(G)|\le (\frac{C}{\alpha})^2$. Hence, $\beta\le C$. The result now follows from the inductive hypothesis and the definitions of $\delta(\alpha)$ and $k(\alpha)$.\end{proof}

\end{document}